\documentclass[12pt,reqno]{amsart}
\usepackage[utf8]{inputenc}
\usepackage{amsmath}
\usepackage{amssymb}
\usepackage{amsthm}
\usepackage{xcolor}
\usepackage{fourier}
\usepackage{fullpage}
\usepackage{hyperref}
\usepackage{tikz}
\usepackage{soul}
\usepackage{mathrsfs}
\usepackage{enumitem}
\usepackage{bm}
\usepackage{mathtools}
\usepackage{amstext,amsgen,amsfonts,verbatim}
\usepackage{microtype}

\newcommand{\cH}{{\mathcal H}}

\newcommand{\R}{{\mathbb{R}}}

\DeclareMathOperator{\diam}{diam}

\newcommand{\HH}{\mathbb H}
\newcommand{\E}{\mathbb E}
\newcommand{\cD}{\mathcal D}
\newcommand{\1}{\mathbf 1}

\newcommand{\energy}{\mathcal E}
\newcommand{\norm}[1]{\left\lVert #1\right\rVert}

\numberwithin{equation}{section}
\newtheorem{theorem}{Theorem}[section]
\newtheorem{proposition}[theorem]{Proposition}
\newtheorem{lemma}[theorem]{Lemma}
\newtheorem{definition}[theorem]{Definition}
\newtheorem{corollary}[theorem]{Corollary}
\theoremstyle{remark}
\newtheorem{remark}[theorem]{Remark}

\title{Sharp boundedness criteria for nonnegative kernels on regular curves in the Heisenberg group}
\author{Lingxiao Zhang}
\address{Center for Applied Mathematics and KL-AAGDM, Tianjin University, Tianjin, 300072, China}
\email{lingxiaoz@tju.edu.cn}
\subjclass[2020]{28A75,28C10}

\begin{document}
\begin{abstract}
In this paper we study singular integrals associated with the nonnegative kernels
\[
K_\alpha(x,y,z)=\frac{|z|^{\alpha/2}}{\|(x,y,z)\|^{\alpha+1}}, \qquad \alpha>0,
\]
in the first Heisenberg group $\mathbb H$, where $\|\cdot\|$ denotes the Korányi norm. This family was introduced by Chousionis and Li.

We first show that there exists a compact $1$-Ahlfors regular horizontal $C^1$ curve $E\subset\mathbb H$ such that the singular integrals associated with $K_\alpha$ are unbounded on $L^2(E)$ for every $0<\alpha<4$. This extends the counterexamples of Chousionis, Li and Zhang from $0<\alpha<2$ to the full range $0<\alpha<4$. Combined with the boundedness theorem of Fässler and Orponen for $\alpha\ge4$, our result determines the exact range of exponents for boundedness on every $1$-Ahlfors regular curve.

We also consider the more general kernels
\[
K_\Phi(p)=\frac{1}{\|p\|} \Phi\!\left(\frac{|z|^{1/2}}{\|p\|}\right), \qquad p=(x,y,z)\neq0,
\]
where $\Phi:[0,1]\to[0,\infty)$ is continuous and nondecreasing, with $\Phi(0)=0$. We show that the associated singular integral is $L^2$-bounded on every $1$-Ahlfors regular curve if and only if $\Phi(u)=O(u^4)$ as $u\to0$. When this condition fails, we construct a compact $1$-Ahlfors regular horizontal $C^1$ curve on which the operator is unbounded. The curves in both constructions can be chosen as horizontal lifts of planar $C^1$ graphs with arbitrarily small slope.
\end{abstract}
\maketitle

\section{Introduction and preliminaries}

The relation between the geometry of a set and the boundedness of singular integral operators is a central topic in harmonic analysis and geometric measure theory. For the Cauchy transform, fundamental links with curvature and rectifiability were established by Melnikov and Verdera \cite{MV1995} and Mattila, Melnikov, and Verdera \cite{MMV1996}. Cancellation plays an important role in many of the classical results. For example, the kernel $|s-t|^{-1}$ does not define a bounded singular integral on an interval, since its integral diverges near $s=t$. In the Heisenberg group, Chousionis and Li~\cite{CL} showed that some nonnegative kernels can nevertheless define bounded singular integrals on every $1$-Ahlfors regular curve. In this paper we determine the exact range of exponents for their family of kernels to be bounded and prove a more general necessary and sufficient condition for boundedness.

The first Heisenberg group $\HH$ is $\R^3$ equipped with the group operation
\[
 (x,y,z)\cdot(x',y',z') =\bigl(x+x',y+y',z+z'+\tfrac12(xy'-yx')\bigr).
\]
We use the homogeneous norm
\[
 \|(x,y,z)\|=\bigl((x^2+y^2)^2+z^2\bigr)^{1/4}
\]
and the left invariant metric $d(p,q)=\|p^{-1}q\|$. The dilations are $\delta_r(x,y,z)=(rx,ry,r^2z)$, for $r>0$, and satisfy $d(\delta_r p,\delta_r q)=r\,d(p,q)$. Throughout the paper, $\cH^1$ denotes the one-dimensional Hausdorff measure with respect to $d$.

A Borel set $E\subset\HH$ is called $1$-Ahlfors regular if there is a constant $C\ge1$ such that
\[
 C^{-1}r\le\cH^1(E\cap B(p,r))\le Cr \qquad\text{for all }p\in E,\quad 0<r\le\diam E.
\]
We write $L^2(E)=L^2(\cH^1|_E)$.

\begin{definition}
\label{czkernel}
A continuous function $K: \mathbb{H}\backslash \{0\} \to \mathbb{R}^d$ is called a \textit{$1$-dimensional Calder\'on-Zygmund (CZ) kernel} in $\mathbb{H}$ if there exist $\kappa \in (0,1),\beta \in (0,1], C_K \ge 1,$ such that:
\begin{itemize}
\item (Growth condition) 
\begin{equation}\label{czgrowth}
|K(p)| \le C_K \frac{1}{\|p\|}, \quad \forall p\in \mathbb{H}\backslash \{0\},
\end{equation}
    
\item (H\"older continuity)
\begin{equation}\label{czhol}
|K(p_1)-K(p_2)| \le C_K \frac{\|p_2^{-1} \cdot p_1\|^\beta}{\|p_1\|^{1+\beta}}, \quad \forall p_1, p_2\in \mathbb{H}\backslash \{0\} \text{ with } d(p_1, p_2) \le \kappa \|p_1\|.
\end{equation}
\end{itemize}
\end{definition}

If $K$ is a $1$-dimensional CZ kernel in $\HH$ and $E\subset \HH$ is a $1$-Ahlfors regular set, then the truncated singular integrals
\[
 T_{K,E,\varepsilon}f(p) =\int_{\{q\in E:\,d(p,q)>\varepsilon\}} K(p^{-1}q)f(q)\,d\cH^1(q),\qquad p\in E,
\]
are well-defined for $f\in L^2(E)$ and $\varepsilon>0$ when $p\in (1,\infty)$. We say that the associated singular integral operator $T_{K,E}$ is bounded on $L^2(E)$ if there is a constant $C$ such that
\[
 \norm{T_{K,E,\varepsilon}f}_{L^2(E)} \le C\norm{f}_{L^2(E)} \qquad\text{for every }\varepsilon>0\text{ and }f\in L^2(E).
\]
The constant is independent of $\varepsilon$, but may depend on $K$ and $E$.

Chousionis and Li~\cite{CL} introduced the family
\begin{equation}\label{eq:powers}
 K_\alpha(p)=\frac{|z|^{\alpha/2}}{\|p\|^{\alpha+1}}, \qquad p=(x,y,z)\neq0,\quad\alpha>0.
\end{equation}
These kernels are nonnegative and $-1$-homogeneous, i.e., $K_\alpha(\delta_r p)=r^{-1}K_\alpha(p)$. They are also $1$-dimensional CZ kernels. A horizontal line is a left translate of $\{(ta,tb,0):t\in\R\}$, where $(a,b)\neq(0,0)$. If $p$ and $q$ belong to the same horizontal line, the third coordinate of $p^{-1}q$ is zero. Consequently, $K_\alpha(p^{-1}q)=0$ whenever $p\neq q$ lie on that line. Thus the associated operators vanish on horizontal lines for every $\alpha>0$.

In~\cite{CL}, Chousionis and Li proved that $T_{K_8,E}$ is bounded on $L^2(E)$ whenever $E$ is a $1$-Ahlfors regular curve. They also showed that, for a $1$-Ahlfors regular set $E$, boundedness of $T_{K_2,E}$ implies that $E$ is contained in a regular curve. F\"assler and Orponen improved the first result by proving boundedness on regular curves for every $\alpha\ge4$; see~\cite[Theorem 1.7]{FO}. More recently, Chousionis, Li and Zhang~\cite{CLZ26} proved that the boundedness of $T_{K_4, E}$ also implies that $E$ is contained in a regular curve. Together, these results characterize the $1$-Ahlfors regular sets contained in regular curves by the $L^2$-boundedness of the operator associated with $K_4$.

It is therefore natural to ask whether $K_\alpha$ remains bounded on every regular curve when $\alpha<4$. F\"assler and Orponen raised this question for $\alpha\ge2$ after their Theorem 1.7. Chousionis, Li and Zhang~\cite[Theorem 1.5]{CLZ26} constructed a regular curve $E_\alpha$ on which $T_{K_\alpha,E_\alpha}$ is unbounded for each $0<\alpha<2$. The remaining range is $2\le\alpha<4$. We show that the exponent $4$ is sharp. In fact, there exists a compact $1$-Ahlfors regular horizontal $C^1$ curve for which unboundedness holds for every $0<\alpha<4$ simultaneously.

We obtain this result from a more general construction. We first describe the curves that we will use. For $g\in L^\infty(0,1)$, set
\begin{equation}\label{eq:lift}
\begin{aligned}
f_g(t)&=\int_0^t g(u)\,du, &z_g(t)&=\frac12\int_0^t\bigl(ug(u)-f_g(u)\bigr)\,du,\\
\gamma_g(t)&=(t,f_g(t),z_g(t)),&&\qquad 0\le t\le1.
\end{aligned}
\end{equation}
Since $g$ is bounded and measurable, $f_g$ and $z_g$ are continuous. Note $\gamma_g$ is the horizontal lift of the planar graph $t\mapsto(t,f_g(t))$. Indeed, its coordinates satisfy $z_g'(t)=\tfrac12\bigl(t f_g'(t)-f_g(t)\bigr)$ almost everywhere. When $\norm{g}_\infty\le1$, the map $\gamma_g$ is bi-Lipschitz, with absolute constants, and its image is a compact regular curve. These facts are proved in Lemma~\ref{lem:lift}. If $g$ is continuous, then $\gamma_g$ is also $C^1$ in the Euclidean coordinates and has nonzero derivative. In this case, $\norm{g}_\infty$ is the maximum absolute value of the slope of the planar graph. We refer to $g$ as the planar slope, since $f_g'=g$ almost everywhere.

\begin{definition}
For a compact $1$-Ahlfors regular curve $E$, put $\mu=\cH^1|_E$ and define the (possibly infinite) energy
\[
 \energy_K(E)=\lim_{\varepsilon\downarrow0} \iint_{\substack{(p,q)\in E\times E\\d(p,q)>\varepsilon}} K(p^{-1}q)\,d\mu(q)\,d\mu(p).
\]
The limit exists because the kernel is nonnegative.
\end{definition}

If $\energy_K(E)=\infty$, then $T_{K,E}$ is unbounded on $L^2(E)$: a uniform operator bound, tested against the constant function $1$ on $E$, would give a finite upper bound for all the truncated energies.

Let $\Phi:[0,1]\to[0,\infty)$ be continuous and nondecreasing, with $\Phi(0)=0$. Consider
\begin{equation}\label{eq:profile}
\theta(p)=\frac{\sqrt{|z|}}{\|p\|},\qquad K_\Phi(p)=\frac{\Phi(\theta(p))}{\|p\|}, \qquad p=(x,y,z)\neq0.
\end{equation}
Since $0\le\theta(p)\le1$ and $\theta(\delta_r p)=\theta(p)$, the kernel $K_\Phi$ is $-1$-homogeneous. The choice $\Phi(u)=u^\alpha$ gives $K_\Phi=K_\alpha$. For general $\Phi$ we impose continuity, but do not require a H\"older continuity condition. Our first theorem gives an exact criterion for boundedness within this family.

\begin{theorem}\label{thm:classification}
Let $\Phi:[0,1]\to[0,\infty)$ be continuous and nondecreasing, with $\Phi(0)=0$, and let $K_\Phi$ be defined by~\eqref{eq:profile}. The following are equivalent:
\begin{enumerate}[label=\textup{(\roman*)}]
\item $\Phi(u)=O(u^4)$ as $u\downarrow0$.

\item $T_{K_\Phi,E}$ is bounded on $L^2(E)$ for every regular curve $E$.

\item There exists $\eta\in(0,1]$ such that $T_{K_\Phi,E}$ is bounded on $L^2(E)$ for every $E=\gamma_g([0,1])$ with $g\in C([0,1])$ and $\norm{g}_\infty\le\eta$.
\end{enumerate}
If \textup{(i)} fails, then for every $\eta\in(0,1]$ there is $g\in C([0,1])$ with $\norm{g}_\infty\le\eta$ such that
\begin{equation}\label{eq:infinite-energy}
\energy_{K_\Phi}\bigl(\gamma_g([0,1])\bigr)=\infty.
\end{equation}
These curves are $1$-Ahlfors regular with an absolute regularity constant.
\end{theorem}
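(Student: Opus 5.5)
The plan is to prove (i)$\Rightarrow$(ii)$\Rightarrow$(iii) directly, and to prove the final assertion (infinite energy when (i) fails) by an explicit construction; that construction also gives (iii)$\Rightarrow$(i) by contraposition.

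\textbf{The easy implications.} For (i)$\Rightarrow$(ii), write $\Phi(u)\le C_0u^4$ for $u\le u_0$. On $[u_0,1]$ we have $\Phi\le\Phi(1)\le(\Phi(1)/u_0^4)u^4$, so $\Phi(u)\le Cu^4$ on all of $[0,1]$. Hence $0\le K_\Phi\le CK_4$ pointwise, because $K_4=\theta^4/\|p\|$.

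The kernel is nonnegative, so $|T_{K_\Phi,E,\varepsilon}f|\le C\,T_{K_4,E,\varepsilon}|f|$ pointwise. Boundedness on every regular curve then follows from the theorem of F\"assler and Orponen for $\alpha=4$ \cite[Theorem 1.7]{FO}, with a constant independent of $\varepsilon$. No H\"older regularity of $K_\Phi$ is needed, since we only dominate it by a kernel already known to be bounded.

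(ii)$\Rightarrow$(iii) is immediate: by Lemma~\ref{lem:lift}, each $\gamma_g([0,1])$ with $\|g\|_\infty\le1$ is a regular curve. The same lemma gives the absolute regularity constant in the last sentence of the theorem.

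For (iii)$\Rightarrow$(i) it suffices to prove the final assertion. Infinite energy means the operator fails the test against the constant function $1$, as noted after the definition of $\energy_K$.

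\textbf{Geometry of the lift.} For $s<t$, the third coordinate of $\gamma_g(s)^{-1}\gamma_g(t)$ is the signed area between the chord and the graph of $f_g$ over $[s,t]$. Up to absolute constants, this area is $(t-s)^2$ times a weighted mean oscillation of $g$ over $[s,t]$ (for instance $\int_s^t (g-\bar g)$ against a fixed weight). Since $\|\gamma_g(s)^{-1}\gamma_g(t)\|\approx t-s$, this gives
\[
\theta\bigl(\gamma_g(s)^{-1}\gamma_g(t)\bigr)^2\approx \text{(signed oscillation of } g \text{ at scale } t-s \text{ near } s).
\]
Decompose the energy dyadically in $r=t-s$. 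If at a positive proportion of pairs at scale $r\approx2^{-j}$ the slope changes coherently by an amount $d_j$, then by monotonicity of $\Phi$ that scale contributes $\gtrsim\Phi(c\sqrt{d_j})$, up to the measure of the points where this happens. The energy is therefore $\gtrsim\sum_j\Phi(c\sqrt{d_j})$.

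The constraint is that $g$ must stay continuous with $\|g\|_\infty\le\eta$. The key point is that this does \emph{not} force $\sum_j d_j<\infty$: the increments may cancel. For example, a slowly rotating phase $g=\eta\sin(\psi(\log(1/|t-t_0|)))$ near a center $t_0$ has slope increments $d_j\approx\eta\,\psi'(j)$ at scale $2^{-j}$, and $g$ remains bounded. Only quantities like $\sum_j d_j^2$ are controlled, which matches the $u^4$ threshold.

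\textbf{Construction.} Suppose (i) fails. Choose $u_k\downarrow0$ with $\Phi(u_k)\ge M_ku_k^4$ and $M_k\to\infty$ fast. In block $k$, use $n_k\approx1/(M_ku_k^4)$ consecutive dyadic scales with increments $d=(2u_k/c)^2$. By monotonicity, each such scale contributes at least $\Phi(u_k)$, so block $k$ contributes $\gtrsim n_k\Phi(u_k)\ge1$. Stacking infinitely many disjoint ranges of scales then gives infinite energy.

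To make this contribution carry unit weight rather than the tiny weight near a single center, I will place the phase-rotation pattern around a dense lattice of centers at each scale (a self-similar, Weierstrass-type arrangement). Then a fixed fraction of $[0,1]$ sees the coherent increment at every scale. Continuity comes from the phase changing slowly, plus uniform convergence once the amplitude is kept at $\eta$; $\|g\|_\infty\le\eta$ holds by construction.

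\textbf{Main obstacle.} The hard step is this last one: producing a single continuous $g$ with $\|g\|_\infty\le\eta$ whose slope, at a positive-measure set of points and at every scale in the chosen ranges, has \emph{signed} coherent increments of size $\approx u_k^2$. At the same time, the finer-scale pieces must not destroy the area estimate at coarser scales. I would first try a lacunary construction $g=\eta\sin(\Psi)$, where $\Psi=\sum_j\epsilon_j h(2^jt)$ for a fixed smooth periodic $h$ and $\epsilon_j\approx u_k^2/\eta$ within block $k$. I would then verify the scale-$2^{-j}$ area estimate by a Taylor expansion, with the terms for $i>j$ averaging out and the terms for $i<j$ being essentially linear.
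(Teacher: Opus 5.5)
Your arguments for (i)$\Rightarrow$(ii)$\Rightarrow$(iii), for the absolute regularity constant, and your reduction of (iii)$\Rightarrow$(i) to the infinite-energy assertion coincide with the paper's. The gap is in that assertion, which carries all the content, and the decisive problem is continuity. In the new range (e.g.\ $\Phi(u)=u^\alpha$ with $2\le\alpha<4$, or $u^4(\log(e/u))^\beta$) one has $\Phi(u)\le Cu^2$. Hence $\sum_j\Phi(c\sqrt{d_j})\lesssim\sum_j d_j\approx\eta\sum_j\epsilon_j$, so divergence of your lower bound forces $\sum_j\epsilon_j=\infty$; with your parameters, $n_k\Phi(u_k)\ge1$ gives $n_ku_k^2\gtrsim1$ in every block. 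Take $h(x)=\cos2\pi x$. Then the phase is unbounded, since $\Psi_N(0)=\sum_{j\le N}\epsilon_j\to\infty$, and $\eta\sin\Psi$ is not continuous. On $[0,2^{-N}]$ the partial sum $\Psi_N$ stays within $O(\max_j\epsilon_j)$ of $\sum_{j\le N}\epsilon_j$, while the tail has mean square $\frac12\sum_{j>N}\epsilon_j^2$ there. So in every neighbourhood of $0$, $\eta\sin\Psi$ comes close to every value in $[-\eta,\eta]$ on sets of positive measure. The outer $\sin$ with fixed amplitude buys boundedness, not continuity. Your single-centre spiral is likewise discontinuous at $t_0$ because $\psi(j)\to\infty$, and the paper's own $g=\eta\sin S$ is only bounded and measurable for the same reason.

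The missing idea is the paper's truncate--smooth--glue step. First keep only finitely many testing rectangles, which lie a positive distance from the diagonal (Remark~\ref{rem:finite-rectangles}). Next approximate the slope in $L^1$ by $C_c^\infty$ functions; this moves the lift uniformly and, by continuity of $\Phi$, preserves a large integral (Lemma~\ref{lem:smooth}). Finally glue rescaled copies $b_j$ with $\norm{b_j}_\infty\le\eta2^{-j}$ on intervals of length $\lambda_j$ accumulating at $0$ (Proposition~\ref{prop:gluing}). Via $\gamma_g(x_j+\lambda_ju)=P_j\cdot\delta_{\lambda_j}\gamma_{b_j}(u)$ and $-1$-homogeneity, block $j$ contributes $\lambda_jI_\Phi(b_j)>2^{j-2}$. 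Your worry about small weight near a single point disappears because each block is itself a full-density pattern over finitely many scales, with as much energy as you like.

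Second, the one-scale area estimate you flag as the main obstacle fails as planned with consecutive dyadic scales. Coarser terms are not harmless just because they are ``essentially linear''. A slope $g(u)=\sigma(u-m)$ gives $A_g(s,t)=\sigma(t-s)^3/12\neq0$, so the term at scale $i<j$ contributes about $\eta\epsilon_i2^{i-j}$ to $|A_g(s,t)|/(t-s)^2$. Finer terms contribute boundary terms of size about $\eta\epsilon_i2^{j-i}$. Over a block with all $\epsilon_i\approx\epsilon$, both sums are of the same order $\eta\epsilon$ as the main term, so cancellation cannot be excluded without separating the scales. The paper avoids this with Rademacher steps. The coarser levels are constant on each dyadic interval of length $\ell_n$, and constants contribute nothing to \eqref{eq:area}, which gives the exact formula \eqref{eq:coarse-area}. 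The finer part $h_n$ has exact mean zero on intervals of length $\ell_{n+1}$ by \eqref{eq:conditional-mean}. The sparseness condition \eqref{eq:sparse} then makes its contribution at most a quarter of the main term.
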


Condition \textup{(i)} means that $\Phi(u)\le C u^4$ for all sufficiently small $u>0$. By continuity, the same inequality holds on $[0,1]$ after increasing $C$. Hence $K_\Phi\le C K_4$, and thus $\textup{(i)} \Rightarrow \textup{(ii)}$ and $\textup{(i)} \Rightarrow \textup{(iii)}$ follow from the theorem of F\"assler and Orponen. The main part of our proof is the construction of a counterexample when \textup{(i)} fails: we will show that unboundedness can occur even on horizontal $C^1$ lifts of planar graphs with arbitrarily small planar slope.

For the kernels $K_\alpha$, we can choose the same curve for all $0<\alpha<4$. More precisely, we have the following theorem.

\begin{theorem}\label{thm:universal}
For every $\eta\in(0,1]$, there is a function $g\in C([0,1])$ with $\norm{g}_\infty\le\eta$ such that $E=\gamma_g([0,1])$ satisfies
\[
\energy_{K_\alpha}(E)=\infty \qquad\text{for every }0<\alpha<4.
\]
Consequently, $T_{K_\alpha,E}$ is unbounded on $L^2(E)$ for every $0<\alpha<4$, whereas it is bounded for every $\alpha\ge4$.
\end{theorem}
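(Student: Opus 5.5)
The plan is to reduce the energy of a lift $\gamma_g$ to a multiscale sum over the slope $g$, and then to build one continuous $g$ that makes this sum diverge for every $\alpha<4$ at once. The statement about $\alpha\ge4$ is not part of the construction: it follows from \cite[Theorem 1.7]{FO}, because $E=\gamma_g([0,1])$ is a regular curve by Lemma~\ref{lem:lift}. For the unboundedness it suffices to show $\energy_{K_\alpha}(E)=\infty$ for all $0<\alpha<4$, since an operator bound tested on the constant function $1$ would bound the truncated energies.

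\textbf{Step 1 (geometry of the kernel on a lift).} For $s<t$, compute $\gamma_g(s)^{-1}\gamma_g(t)$ from \eqref{eq:lift}. Its third coordinate is, up to a factor $\pm\frac12$, the signed area $A(s,t)$ between the chord from $(s,f_g(s))$ to $(t,f_g(t))$ and the graph of $f_g$. Equivalently,
\[
A(s,t)=\tfrac12\int_s^t\!\!\int_s^t \operatorname{sgn}(v-u)\,(v-u)\,\bigl(g(u)-g(v)\bigr)\cdots
\]
which I would write as $\frac12\int_s^t (u-s)(t-u)\,dg(u)$ after integrating by parts. Since $\|g\|_\infty\le1$, we have $d(\gamma_g(s),\gamma_g(t))\simeq t-s$. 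Hence
\[
K_\alpha\bigl(\gamma_g(s)^{-1}\gamma_g(t)\bigr)\simeq \frac{|A(s,t)|^{\alpha/2}}{(t-s)^{\alpha+1}},
\]
so $\theta^2\simeq |A|/(t-s)^2$. This quantity is a normalized oscillation of $g$ on $[s,t]$: it is the pairing of $g$ with the mean-zero bump $(u-s)(t-u)$, differentiated. Integrating over pairs at dyadic scale $r=2^{-j}$ gives the lower bound
\[
\energy_{K_\alpha}(E)\gtrsim \sum_j \int_0^1 |\Delta_j g(t)|^{\alpha/2}\,dt .
\]
Here $\Delta_j g(t)$ is the normalized quantity $A(t,t+2^{-j})/2^{-2j}$, a Littlewood--Paley-type piece of $g$ at scale $2^{-j}$. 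Heuristically this explains the threshold $4$: at $\alpha=4$ the sum is $\sum_j\|\Delta_j g\|_2^2\lesssim\|g\|_2^2$, which is always finite.

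\textbf{Step 2 (the construction).} I would take
\[
g=\sum_k c_k\,\sigma_k(t)\,\psi(\lambda_k t),
\]
with the following ingredients. The function $\psi$ is a fixed smooth $1$-periodic mean-zero profile. The frequencies $\lambda_k$ are super-lacunary. The amplitudes $c_k=\eta\,k^{-1/2}(\log(k+1))^{-1}$ satisfy $c_k\in\ell^2\setminus\ell^{p}$ for every $p<2$. The functions $\sigma_k$ are smooth factors with values in $[-1,1]$, constant on cells of length $\gg\lambda_k^{-1}$, chosen greedily.

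Super-lacunarity decouples the levels in Step 1. At scale $r\approx\lambda_k^{-1}$, lower levels contribute to $\theta^2$ only $O(c_j\lambda_j/\lambda_k)$, and higher levels only $O(c_j\lambda_k/\lambda_j)$. So wherever $|\sigma_k|\ge\frac12$, the $k$-th level gives $\int|\Delta g|^{\alpha/2}\gtrsim c_k^{\alpha/2}$ per unit length. It then suffices that a set $G$ of positive measure has $|\sigma_k|\ge\frac12$ for all $k$. On $G$ the sum $\sum_k c_k^{\alpha/2}$ diverges for every $\alpha<4$, which gives $\energy_{K_\alpha}(E)=\infty$ simultaneously for all $\alpha<4$.

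\textbf{Step 3 (main obstacle: $\|g\|_\infty\le\eta$ and continuity with $\sum c_k=\infty$).} The series is not absolutely convergent, so boundedness and continuity must come from the signs $\sigma_k$. I would run a stopping-time (martingale-like) scheme on the cells. The sign of level $k$ on a cell is chosen so that the local averages of the partial sums behave like a martingale with square-summable increments $c_k$. Each level is switched off smoothly, via $\sigma_k\to0$, on cells where the running envelope approaches $\eta/2$. Since $\sum c_k^2\le \eta^2/100$, Doob's inequality should keep the switched-off set of measure $<\frac12$, which provides $G$. The delicate point is the uniform Cauchy property: the oscillating terms do not telescope pointwise. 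I would handle this by bounding the partial sums on each cell by the controlled envelope plus an error of size $O(c_k)$ from the current oscillation, with $c_k\to0$. This error estimate, and making it compatible with the smooth cutoffs in $\sigma_k$, is the step I expect to be hardest.
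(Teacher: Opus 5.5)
Your Steps 1 and 2 are sound in outline. They play the role of Lemma~\ref{lem:lift} and Proposition~\ref{prop:energy}, and your amplitudes $c_k=\eta k^{-1/2}(\log(k+1))^{-1}$ are exactly the paper's choice in Corollary~\ref{cor:lipschitz-universal}. What they can deliver, however, is only a \emph{bounded measurable} slope with infinite $K_\alpha$-energy for all $\alpha<4$, which is where the paper also stands after Section~\ref{4}.

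The genuine gap is continuity in Step 3, and it is not a matter of an $O(c_k)$ error term. A single switch-off envelope at $\eta/2$ controls $\norm{g_N}_\infty$ but cannot make the limit continuous. To see this, fix $t\in G$ and a level-$N$ cell $Q\ni t$. Since $\psi$ has mean zero, on each active cell $\sigma_k\psi(\lambda_k\cdot)$ takes values $\ge c_\psi>0$, and also values $\le -c_\psi$, on subintervals of length about $\lambda_k^{-1}$. These subintervals contain whole level-$(k+1)$ cells, on which $g_k$ is essentially constant. Follow such a nest inside $Q$ for $k=N,N+1,\dots$. The partial sums gain at least $c_\psi c_k$ at each step, and $\sum_k c_k=\infty$, so they climb until your rule switches them off, i.e.\ to about $\eta/2$. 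Choosing the other subintervals, they descend to about $-\eta/2$. Hence $\sup_Q g-\inf_Q g\ge\eta-o(1)$ for every cell $Q\ni t$, whatever signs you pick. So $g$ is discontinuous at every point of $G$, and $(g_N)$ is not uniformly Cauchy. Bounding partial sums by ``envelope $+O(c_k)$'' only reproves boundedness. Continuity requires control of the \emph{tails} $g_M-g_N$, uniformly in $t$.

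Within your framework, you would need stopping against shrinking tail envelopes. Choose sparse $N_1<N_2<\cdots$ and $\epsilon_i\downarrow0$ with $\sum_i\epsilon_i^{-2}\sum_{k>N_i}c_k^2$ small. Then switch off, at $t$, all levels after the first $k>N_i$ at which $|g_k(t)-g_{N_i}(t)|$ approaches $\epsilon_i$. Doob's inequality on each tail, plus a union bound, still leaves a good set of positive measure. Moreover $\norm{g_M-g_N}_\infty\lesssim\epsilon_i+\sup_{k>N_i}c_k$ for $M>N\ge N_i$, so the smooth partial sums converge uniformly. You would still have to carry the approximate-martingale errors through this argument.

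The paper avoids the issue entirely:
\begin{enumerate}
\item It keeps $g=\eta\sin S$ merely bounded. The sine gives $\norm{g}_\infty\le\eta$ for free, and independence gives the exact averages $\E(g\mid\cD_{k_n})=\eta c_n\sin S_n$.
\item Remark~\ref{rem:finite-rectangles} and Lemma~\ref{lem:smooth} then produce $C_c^\infty$ slopes with arbitrarily large finite energy. The energy is captured on finitely many rectangles away from the diagonal, where $K_\Phi$ is continuous, so $L^1$-approximation of the slope suffices.
\item Proposition~\ref{prop:gluing} glues rescaled copies $b_j$, with $\norm{b_j}_\infty\le\eta2^{-j}$ and $I_\Phi(b_j)>2^{2j}$, on intervals accumulating at $0$. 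Continuity at $0$ is automatic, and homogeneity gives block energies $>2^{j-2}$.
\item Uniformity in $\alpha$ comes from running this once for $\Phi_\beta(u)=u^4(\log(e/u))^\beta$, which is $\lesssim u^\alpha$ for every $\alpha<4$.
\end{enumerate}
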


Theorem~\ref{thm:universal} addresses the exponent question following \cite[Theorem 1.7]{FO}, and extends the counterexamples of Chousionis, Li and Zhang~\cite{CLZ26} from $0<\alpha<2$ to the full range $0<\alpha<4$. Combined with the boundedness theorem of F\"assler and Orponen~\cite[Theorem 1.7]{FO} for $\alpha\ge4$, our result shows that the singular integral associated with $K_\alpha$ is $L^2$-bounded on every $1$-Ahlfors regular curve if and only if $\alpha\ge4$.

The $C^1$ conclusion also relates to a result of Chousionis, Li and Zimmerman~\cite[Theorem 1.1]{CLZi}. They proved that a $1$-dimensional CZ kernel which is uniformly $L^2$-bounded on horizontal lines defines bounded operators on compact regular $C^{1,\beta}$ curves in Carnot groups, for every $\beta\in(0,1]$. For every $\alpha>0$, the kernel $K_\alpha$ satisfies their assumptions. Thus the associated singular integral operators are $L^2$-bounded on every compact regular $C^{1,\beta}$ curve in $\mathbb H$, for $0<\beta\le 1$. However, Theorem~\ref{thm:universal} provides a compact regular horizontal $C^1$ curve on which these operators are unbounded for every $0<\alpha<4$. Therefore, $C^1$ regularity alone does not suffice for their boundedness conclusion.

We briefly describe the construction. We choose positive amplitudes $(a_n)_{n\ge 1}$ with $\sum_n a_n^2$ small and Rademacher functions $r_{k_n}$ at sufficiently separated scales. The function
\[
g=\eta\sin\!\left(\sum_{n=1}^\infty a_n r_{k_n}\right)
\]
is bounded by $\eta$, where the series is interpreted as an $L^2$ limit. At each selected scale, its dyadic averages retain oscillations comparable to $\eta a_n$ on a collection of intervals of total length bounded below. When estimating the energy $\energy_{K_\Phi}(E)$, the separation of scales allows us to control the contribution from all later oscillations to the signed area of the planar graph.

For each such interval, we obtain a lower bound for the vertical coordinate of $\gamma_g(s)^{-1}\gamma_g(t)$ on a rectangle of pairs $(s,t)$ whose area is a fixed fraction of the square of the interval length.
Rectangles coming from different scales are disjoint. Proposition~\ref{prop:energy} then gives
\[
\energy_{K_\Phi}\bigl(\gamma_g([0,1])\bigr) \ge c\sum_{n=1}^\infty\Phi(c_\eta\sqrt{a_n}),
\]
where $c>0$ is an absolute constant and $c_\eta>0$ depends only on $\eta$.
For $\Phi(u)=u^\alpha$, the sum on the right is a positive constant times $\sum_n a_n^{\alpha/2}$. When $\alpha<4$, this sum can diverge while $\sum_n a_n^2$ remains finite. More generally, if $\Phi(u)/u^4$ is unbounded near zero, we choose the amplitudes so that the same energy lower bound diverges.

The geometric background includes the traveling salesman theorems of Jones \cite{Jones1990} and Okikiolu \cite{Okikiolu1992} in Euclidean spaces, and related results of Ferrari, Franchi, and Pajot \cite{FFP2007} and Li and Schul \cite{LSCurvature2016,LSLength2016} in the Heisenberg group. Juillet \cite{Juillet} constructed a counterexample showing that the Euclidean square-summability condition on $\beta$-numbers need not hold for rectifiable curves in the Heisenberg group. Li \cite[Section~3]{Li} modified this construction to obtain divergence of the corresponding $\beta$-number sums for every exponent below four. 
Here we describe the curves through their planar slopes and estimate the kernel energy directly using explicit signed-area formulas. A methodological parallel appears in the work of Anceschi, Dietert, Guerand, Loher, Mouhot, and Rebucci \cite{ADGLMR} on hypoelliptic Poincar\'e inequalities. They refine earlier methods based on piecewise affine trajectories by constructing smooth trajectories through a control problem. Although the problems are different, both approaches choose the functions defining the curves so that the quantities needed in the estimates can be computed and controlled directly.

The planar slope constructed from the Rademacher functions is initially only bounded and measurable. To obtain a $C^1$ curve, we first approximate finite parts of the construction by smooth planar slopes while retaining arbitrarily large energy. We then place rescaled copies on disjoint intervals accumulating at $0$, choosing the planar slope bounds to tend to zero as the intervals approach $0$. The resulting planar slope $g$ is continuous, so $\gamma_g$ is a horizontal $C^1$ curve with infinite energy.

The paper is organized as follows. Section~\ref{2} establishes estimates for horizontal lifts and compares the auxiliary integral $I_\Phi(g)$ with the energy $\energy_{K_\Phi}(E)$. Section~\ref{3} constructs the bounded planar slope and estimates its dyadic averages. Section~\ref{4} proves the energy lower bound and chooses the amplitudes. Section~\ref{5} gives the smooth approximation and gluing arguments, records a consequence for logarithmic modifications of $K_4$, and completes the proofs of Theorems~\ref{thm:classification} and~\ref{thm:universal}.

\section{Horizontal lifts}\label{2}

In this section, we establish estimates for horizontal lifts and compare the auxiliary integral $I_\Phi(g)$ with the energy $\energy_{K_\Phi}(E)$.

The following lemma expresses the third coordinate of $\gamma_g(s)^{-1}\gamma_g(t)$ in terms of signed area and shows that $\gamma_g$ parametrizes a $1$-Ahlfors regular curve.

\begin{lemma}\label{lem:lift}
Suppose $\norm{g}_\infty\le\eta\le1$. For $0\le s<t\le1$, the third coordinate of $\gamma_g(s)^{-1}\gamma_g(t)$ is
\begin{equation}\label{eq:area}
A_g(s,t)=\int_s^t\left(u-\frac{s+t}{2}\right)g(u)\,du =\frac{t-s}{2}\big(f_g(s)+f_g(t)\big)-\int_s^t f_g(u)\,du.
\end{equation}
Furthermore,
\begin{equation}\label{eq:bilip}
|t-s|\le d(\gamma_g(s),\gamma_g(t))\le D_\eta|t-s|\le2|t-s|, \quad D_\eta=\big((1+\eta^2)^2+\eta^2/16\big)^{1/4}.
\end{equation}
For Borel $B\subset[0,1]$, 
\begin{equation}\label{eq:measure}
|B|\le\cH^1(\gamma_g(B))\le2|B|.
\end{equation}
In particular, $E=\gamma_g([0,1])$ is $1$-Ahlfors regular and $1\le\cH^1(E)\le2$.
\end{lemma}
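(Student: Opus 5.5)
We verify the three parts of the lemma separately: the signed-area formula \eqref{eq:area} by direct computation, then the bi-Lipschitz bound \eqref{eq:bilip}, and finally \eqref{eq:measure} and Ahlfors regularity as consequences.

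\emph{The area formula.} By the group law with $p=\gamma_g(s)$ and $q=\gamma_g(t)$, we have $p^{-1}=(-s,-f_g(s),-z_g(s))$. Hence
\[
p^{-1}q=\Bigl(t-s,\;f_g(t)-f_g(s),\;z_g(t)-z_g(s)-\tfrac12\bigl(s f_g(t)-t f_g(s)\bigr)\Bigr).
\]
Write $z_g(t)-z_g(s)=\frac12\int_s^t\bigl(ug(u)-f_g(u)\bigr)\,du$. Integrate $\int_s^t ug\,du$ by parts, using that $f_g$ is Lipschitz with $f_g'=g$ almost everywhere; this gives $tf_g(t)-sf_g(s)-\int_s^t f_g$. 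Collecting terms produces the second expression in \eqref{eq:area}. For the first expression, integrate $\int_s^t\bigl(u-\frac{s+t}{2}\bigr)g(u)\,du$ by parts with antiderivative $f_g$. The boundary terms are $\frac{t-s}{2}\bigl(f_g(t)+f_g(s)\bigr)$ and the remaining term is $-\int_s^t f_g$, so the two expressions agree.

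\emph{The bi-Lipschitz bound.} Set $h=t-s$. By the formula above, $d(\gamma_g(s),\gamma_g(t))^4=\bigl(h^2+(f_g(t)-f_g(s))^2\bigr)^2+A_g(s,t)^2$, which is at least $h^4$; this gives the lower bound. For the upper bound, $|f_g(t)-f_g(s)|\le\eta h$. Also
\[
|A_g(s,t)|\le\eta\int_s^t\Bigl|u-\frac{s+t}{2}\Bigr|\,du=\eta h^2/4.
\]
Therefore $d^4\le h^4\bigl((1+\eta^2)^2+\eta^2/16\bigr)=D_\eta^4h^4$. At $\eta=1$ this gives $D_1^4=4+1/16<16$, so $D_\eta\le2$.

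\emph{Measure bounds and Ahlfors regularity.} The inequality $|t-s|\le d\le2|t-s|$ says that $\gamma_g$ is $2$-Lipschitz and its inverse is $1$-Lipschitz, from $([0,1],|\cdot|)$ with Lebesgue measure ($=\cH^1$ there) onto $E$. Lipschitz maps scale Hausdorff measure by at most the Lipschitz constant, and $\gamma_g$ is injective. Applying this to $\gamma_g$ and to $\gamma_g^{-1}$ on $\gamma_g(B)$ gives \eqref{eq:measure} for Borel $B$; Borel measurability of $\gamma_g(B)$ follows from injectivity and continuity on compacta. Taking $B=[0,1]$ gives $1\le\cH^1(E)\le2$. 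Now let $p=\gamma_g(s_0)$ and $0<r\le\diam E\le2$. By \eqref{eq:bilip},
\[
\gamma_g\bigl([0,1]\cap[s_0-r/2,s_0+r/2]\bigr)\subset B(p,r)\cap E\subset\gamma_g\bigl([0,1]\cap[s_0-r,s_0+r]\bigr).
\]
The right-hand set has measure at most $4r$. On the left, the interval $[0,1]\cap[s_0-r/2,s_0+r/2]$ has length at least $\min(r/2,1)\ge r/4$, using $r\le2$. Hence $\cH^1(E\cap B(p,r))\ge r/4$.

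The only step needing care is the sign bookkeeping in the group law for the third coordinate. The rest is routine.
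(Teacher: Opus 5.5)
Your proof is correct and follows the paper's argument essentially step by step: the group law plus integration by parts for \eqref{eq:area}, the bounds $|f_g(t)-f_g(s)|\le\eta h$ and $|A_g(s,t)|\le\eta h^2/4$ substituted into the Kor\'anyi norm, the Lipschitz inequality for $\cH^1$ applied to $\gamma_g$ and to its inverse, and the containment of $E\cap B(p,r)$ between images of two intervals. The only difference is harmless: you weaken the lower Ahlfors bound to $r/4$, but your own estimate $\min(r/2,1)=r/2$ for $r\le2$ already gives the paper's constant $r/2$.
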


\begin{proof}
The group law gives the third coordinate as
\[
z_g(t)-z_g(s)+\tfrac12\big(-s f_g(t)+t f_g(s)\big).
\]
Substituting \eqref{eq:lift} and integrating $u f_g'(u)$ by parts gives the second expression in \eqref{eq:area}; another integration by parts gives the first. Put $r=t-s$. Then
\[
|f_g(t)-f_g(s)|\le\eta r,\qquad |A_g(s,t)|\le\eta\int_s^t|u-(s+t)/2|\,du=\eta r^2/4.
\]
Substitution in $\|\cdot \|$ proves \eqref{eq:bilip}. The map $\gamma_g$ is $2$-Lipschitz and its inverse, the first-coordinate projection, is $1$-Lipschitz. The standard Hausdorff-measure inequality for Lipschitz maps proves \eqref{eq:measure}.

To prove that $E$ is $1$-Ahlfors regular, observe that $\gamma_g^{-1}(B(\gamma_g(t),r))$ is contained in $(t-r,t+r)\cap[0,1]$ and contains $(t-r/2,t+r/2)\cap[0,1]$.
For $0<r\le\diam E\le2$, \eqref{eq:measure} consequently gives
\[
r/2\le\cH^1(E\cap B(\gamma_g(t),r))\le4r.
\]
\end{proof}

It will be convenient to use an auxiliary integral
\begin{equation}\label{eq:param-energy}
I_\Phi(g)=\int_0^1\int_0^1 K_\Phi\big(\gamma_g(s)^{-1}\gamma_g(t)\big)\,dt\,ds.
\end{equation}
By \eqref{eq:measure}, $\energy_{K_\Phi}(E)\ge I_\Phi(g)$.
Moreover, if $E$ is compact and its energy $\energy_{K_\Phi}(E)\ge I_\Phi(g)=\infty$, positivity implies
\begin{equation}\label{eq:norm-test}
\sup_{\varepsilon>0}\norm{T_{K_\Phi,E,\varepsilon}}_{2\to2} \ge \sup_{\varepsilon>0} \frac{\langle T_{K_\Phi,E,\varepsilon}\1_E,\1_E\rangle}{\cH^1(E)} =\infty.
\end{equation}
Thus $T_{K_\Phi, E}$ would be unbounded on $L^2(E)$. Note no principal-value cancellation is involved in the obstruction.

\section{A bounded planar slope with prescribed oscillations}\label{3}

In this section, we construct a bounded measurable planar slope $g$ from a Rademacher series. We compute its dyadic averages explicitly and show that they retain oscillations of the prescribed size. These estimates will be used to control signed area in Section~\ref{4}; see also Remark~\ref{rem:signedarea}.

We use Lebesgue probability measure on $[0,1)$. For $k\ge1$ let
\[
r_k(t)=(-1)^{\lfloor2^k t\rfloor}.
\]
The functions $r_k$ are independent Rademacher functions. Let $\cD_k$ be the sigma-algebra generated by the dyadic intervals of length $2^{-k}$.

Fix positive numbers $(a_n)_{n\ge1}$ satisfying
\begin{equation}\label{eq:square-budget}
\sum_{n=1}^\infty a_n^2\le\frac1{64}.
\end{equation}
Choose integers $1\le k_1<k_2<\cdots$ so that, writing $\ell_n=2^{-(k_n-1)}$,
\begin{equation}\label{eq:sparse}
\ell_{n+1}\le\frac{a_n}{2048}\ell_n.
\end{equation}
There is no restriction on how rapidly $k_n$ grows, so this is possible.
Put
\begin{equation}\label{eq:S}
S_n=\sum_{j=1}^n a_jr_{k_j},\quad S_0=0, \qquad S=\lim_{n\to\infty}S_n\ \text{in }L^2(0,1), \qquad g=\eta\sin S.
\end{equation}
The $L^2$ limit exists by orthogonality. In particular $\norm{g}_\infty\le\eta$.
Define
\begin{equation}\label{eq:conditional}
c_n=\prod_{j>n}\cos a_j,\qquad g_n=\eta c_n\sin S_n.
\end{equation}

\begin{remark}
We set $g=\eta\sin S$ rather than $g=\eta S$ to ensure $\|g\|_\infty\le\eta$, since we only have an $L^2$ bound for $S$.
\end{remark}

The following lemma computes the dyadic averages $g_n$ of $g$ and shows that $g-g_n$ has mean zero on every dyadic interval of length $\ell_{n+1}$. It also shows that $g_n$ retains oscillations of size comparable to $\eta a_n$ on intervals whose total length is at least $15/16$.

\begin{lemma}\label{lem:conditional}
For every $n\ge1$,
\begin{equation}\label{eq:conditional-mean}
g_n=\E(g\mid\cD_{k_n})=\E(g\mid\cD_{k_{n+1}-1}).
\end{equation}
In particular $h_n=g-g_n$ has mean zero on every dyadic interval of length $\ell_{n+1}$, and $\norm{h_n}_\infty\le2\eta$.
On each dyadic interval $I$ of length $\ell_n$, $g_n$ has the form
\begin{equation}\label{eq:two-step}
g_n=C_I+b_I r_{k_n},\qquad b_I=\eta c_n\cos(S_{n-1}|_I)\sin a_n.
\end{equation}
The union of intervals $I$ on which $|S_{n-1}|\le1/2$ has measure at least $15/16$, and on each of these intervals $I$ with $|S_{n-1}|\le1/2$ we have
\begin{equation}\label{eq:b-lower}
b_I\ge\eta a_n/8.
\end{equation}
\end{lemma}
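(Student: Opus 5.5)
The approach is to compute the conditional expectation through independence of the Rademacher functions, using the identity $\sin(x+y)=\sin x\cos y+\cos x\sin y$ for each tail term. Fix $n$ and write $S=S_n+T_n$ with $T_n=\sum_{j>n}a_jr_{k_j}$, where the series converges in $L^2$ and a.e. along a subsequence. The variables $r_{k_j}$ with $j>n$ depend only on binary digits of index $k_j\ge k_{n+1}$. They are therefore independent of $\cD_{k_{n+1}-1}$, while $S_n$ is $\cD_{k_n}$-measurable. Hence
\[
\E(\sin S\mid\cD_{k_{n+1}-1})=\sin S_n\,\E\cos T_n+\cos S_n\,\E\sin T_n .
\]
For finite partial sums, independence gives $\E\cos(\sum_{j=n+1}^m a_jr_{k_j})=\prod_{j=n+1}^m\cos a_j$. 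This follows from $\E e^{iT}=\prod\E e^{ia_jr}=\prod\cos a_j$, which is real, so the $\sin$ expectation vanishes; one can also see it from symmetry $T\mapsto -T$. Passing to the limit $m\to\infty$ by $L^2$ convergence and bounded convergence of $\cos,\sin$ (both are $1$-Lipschitz) gives $\E\cos T_n=c_n$ and $\E\sin T_n=0$. The product $c_n$ converges since $\sum a_j^2<\infty$, and $c_n>0$ because $a_j\le1/8$. Thus $\E(g\mid\cD_{k_{n+1}-1})=\eta c_n\sin S_n=g_n$. Since $g_n$ is $\cD_{k_n}$-measurable and $\cD_{k_n}\subset\cD_{k_{n+1}-1}$, the tower property gives equality with $\E(g\mid\cD_{k_n})$ as well. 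The mean-zero statement for $h_n$ on dyadic intervals of length $2^{-(k_{n+1}-1)}=\ell_{n+1}$ is then immediate, and $\norm{h_n}_\infty\le\norm g_\infty+\norm{g_n}_\infty\le2\eta$.

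For \eqref{eq:two-step}, I use that on a dyadic interval $I$ of length $\ell_n=2^{-(k_n-1)}$ the sum $S_{n-1}$ is constant, since it only involves $r_{k_j}$ with $k_j\le k_{n-1}\le k_n-1$. Writing $r=r_{k_n}\in\{\pm1\}$, we get
\[
\sin(S_{n-1}+a_nr)=\sin S_{n-1}\cos a_n+r\cos S_{n-1}\sin a_n .
\]
This gives $C_I=\eta c_n\sin(S_{n-1}|_I)\cos a_n$ and the stated $b_I$.

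For the measure bound, Chebyshev's inequality applies since $\E S_{n-1}^2=\sum_{j<n}a_j^2\le1/64$, so $|\{|S_{n-1}|>1/2\}|\le 4/64=1/16$. Because $S_{n-1}$ is constant on each $I$, this set is a union of such intervals. On the remaining intervals, $\cos S_{n-1}\ge\cos(1/2)>1/2$. Also $\sin a_n\ge a_n/2$, since $a_n\le1/8$ and $\sin x\ge 2x/\pi$ on $[0,\pi/2]$. Finally, $c_n\ge\prod\cos a_j\ge\prod(1-a_j^2/2)\ge1-\tfrac12\sum a_j^2\ge1/2$. Multiplying gives $b_I\ge\eta\cdot\tfrac12\cdot\tfrac12\cdot\tfrac{a_n}{2}=\eta a_n/8$.

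The only delicate step is justifying the conditional expectation with the infinite tail $T_n$. The plan is to do the computation at finite $m$ and then pass to the limit using $L^2$ continuity of conditional expectation together with the Lipschitz bound $|\sin x-\sin y|\le|x-y|$. Everything else is elementary.
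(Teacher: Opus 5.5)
Your proof is correct and follows essentially the same route as the paper: independence of the selected Rademacher functions together with the addition formula for sine gives $g_n=\eta c_n\sin S_n$. The two-step expansion on dyadic intervals, the Chebyshev bound, and the estimates $\cos S_{n-1}>1/2$, $\sin a_n\ge a_n/2$, $c_n>1/2$ then match the paper exactly. The differences are cosmetic: you handle the infinite tail $T_n$ directly through its independence from $\cD_{k_{n+1}-1}$, using the characteristic function to get $\E\cos T_n=c_n$ and $\E\sin T_n=0$, and you obtain the $\cD_{k_n}$ identity by the tower property. The paper instead averages over finitely many signs and passes to the limit using $L^2$-contractivity of conditional expectation.
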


\begin{proof}
Fix $M>n$. Since $\cD_{k_n}$ is generated by the level sets of $r_1,\ldots,r_{k_n}$, the function $S_n$ is $\cD_{k_n}$-measurable, while $r_{k_{n+1}},\ldots,r_{k_M}$ are independent of $\cD_{k_n}$ and of one another. Each of these Rademacher functions takes the values $+1$ and $-1$ with equal probability.
Thus
\[
\begin{aligned}
\E(\sin S_M\mid\cD_{k_n}) &=\frac{1}{2^{M-n}} \sum_{\varepsilon_{n+1},\ldots,\varepsilon_M\in\{-1,1\}} \sin\left(S_n+\sum_{j=n+1}^M a_j\varepsilon_j\right)\\
&=\sin S_n\prod_{j=n+1}^M\cos a_j.
\end{aligned}
\]
The last equality follows by averaging over one sign at a time and using the identity
\[
\frac{\sin(u+a)+\sin(u-a)}{2}=\sin u\cos a.
\]

The same formula holds conditional on $\cD_{k_{n+1}-1}$, since no further selected sign is measurable there. The sine function is $1$-Lipschitz, so $\sin S_M\to\sin S$ in $L^2$. Conditional expectation is an $L^2$ contraction, proving \eqref{eq:conditional-mean}.

Each $a_n\le1/8$. Also $1-\cos a_n\le a_n^2/2$, and hence finite-product estimates followed by a limit give
\begin{equation}\label{eq:product-lower}
c_n\ge1-\tfrac12\sum_{j>n}a_j^2\ge127/128>1/2.
\end{equation}
On $I$, all the signs appearing in $S_{n-1}$ are constant. Expanding $\sin(S_{n-1}+a_nr_{k_n})$ proves \eqref{eq:two-step}.
Chebyshev's inequality gives
\[
|\{|S_{n-1}|>1/2\}|\le4\norm{S_{n-1}}_2^2 =4\sum_{j<n}a_j^2\le1/16.
\]
On the complementary union of intervals, $\cos S_{n-1}\ge1/2$.
Together with $\sin a_n\ge a_n/2$ and \eqref{eq:product-lower}, this proves \eqref{eq:b-lower}.
\end{proof}

\section{The energy lower bound}\label{4}

In this section, we obtain a lower bound for $I_\Phi(g)$ for the planar slope constructed in Section~\ref{3}. We then choose the amplitudes so that $I_\Phi(g)=\infty$. At this stage, $g$ is only bounded and measurable; we obtain a continuous planar slope in Section~\ref{5}.

The following proposition bounds $I_\Phi(g)$ from below by a sum determined by the amplitudes $(a_n)_{n\ge 1}$. Continuity of $\Phi$ is not needed for this estimate. 

\begin{proposition}[Lower bound for the auxiliary integral]\label{prop:energy}
Let $\Phi:[0,1]\to[0,\infty)$ be nondecreasing with $\Phi(0)=0$.
Given $(a_n)_{n\ge 1}$ satisfying \eqref{eq:square-budget} and $0<\eta\le1$, the function $g$ constructed in Section 3 satisfies
\begin{equation}\label{eq:main-lower}
I_\Phi(g)\ge\frac{15}{512} \sum_{n=1}^\infty\Phi\!\left(\frac{\sqrt\eta}{32}\sqrt{a_n}\right).
\end{equation}
\end{proposition}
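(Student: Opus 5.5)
We prove \eqref{eq:main-lower} by finding, for each scale $n$, a disjoint family of rectangles in $[0,1]^2$ on which the kernel is bounded below by $\Phi(\sqrt\eta\sqrt{a_n}/32)/(2\ell_n)$. These rectangles have total area at least $\tfrac{15}{16}\cdot\tfrac1{16}\ell_n$. Since $K_\Phi\ge0$, we may simply discard all other pairs $(s,t)$. Note that $\tfrac{15}{512}=\tfrac{15}{16}\cdot\tfrac1{16}\cdot\tfrac12$, which is the constant we are aiming for.

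\emph{Choice of rectangles.} Fix $n$ and a dyadic interval $I=[x,x+\ell_n]$ on which $|S_{n-1}|\le1/2$. By Lemma~\ref{lem:conditional}, the union of these intervals has measure at least $15/16$. Since $\ell_n=2^{-(k_n-1)}$, the function $r_{k_n}$ equals $+1$ on the left half of $I$ and $-1$ on the right half. Let
\[
R_I=[x,x+\ell_n/4]\times[x+3\ell_n/4,x+\ell_n],
\]
which has area $\ell_n^2/16$. For $(s,t)\in R_I$ we have $\ell_n/2\le t-s\le\ell_n$. By \eqref{eq:sparse}, $\ell_{n+1}<\ell_n/4$, so the ranges of $t-s$ for different $n$ are disjoint. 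Hence all rectangles $R_I$, over all $n$ and all good $I$, are pairwise disjoint.

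\emph{Lower bound on the signed area.} Write $g=C_I+b_Ir_{k_n}+h_n$ on $I$, using \eqref{eq:two-step}, and insert this into the first formula in \eqref{eq:area}.

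The constant $C_I$ contributes nothing, because $\int_s^t(u-\tfrac{s+t}2)\,du=0$.

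For the step term, put $c=x+\ell_n/2$ and $m=(s+t)/2$. Then
\[
b_I\Bigl(\int_s^c(u-m)\,du-\int_c^t(u-m)\,du\Bigr),
\]
and an explicit computation with $s\le x+\ell_n/4$ and $t\ge x+3\ell_n/4$ gives absolute value at least $b_I\ell_n^2/16$. Indeed, both pieces have a definite sign, and the symmetric case $s=x$, $t=x+\ell_n$ gives $\ell_n^2/4$. Combined with \eqref{eq:b-lower}, this is at least $\eta a_n\ell_n^2/128$.

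For the error term $\int_s^t(u-m)h_n(u)\,du$, split $[s,t]$ into full dyadic intervals $J$ of length $\ell_{n+1}$ plus at most two boundary pieces of length at most $\ell_{n+1}$.
\begin{itemize}
\item On each full $J$, $h_n$ has mean zero, so we may replace $m$ by the centre of $J$. This gives at most $2\eta\ell_{n+1}^2/4$ per $J$, hence at most $\eta\ell_{n+1}\ell_n/2$ in total.
\item The boundary pieces contribute at most $2\cdot2\eta\ell_{n+1}\cdot\ell_n/2$, using $\|h_n\|_\infty\le2\eta$ and $|u-m|\le\ell_n/2$.
\end{itemize}
The total error is therefore at most $3\eta\ell_{n+1}\ell_n\le 3\eta a_n\ell_n^2/2048$ by \eqref{eq:sparse}. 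This is negligible, and we conclude that $|A_g(s,t)|\ge\eta a_n\ell_n^2/256$ on $R_I$.

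\emph{Conclusion.} By \eqref{eq:bilip}, $\|\gamma_g(s)^{-1}\gamma_g(t)\|\le2(t-s)\le2\ell_n$. Hence
\[
\theta\ge\frac{\sqrt{\eta a_n}\,\ell_n/16}{2\ell_n}=\frac{\sqrt{\eta a_n}}{32}.
\]
Since $\Phi$ is nondecreasing, this gives
\[
K_\Phi\ge\frac{\Phi\bigl(\sqrt\eta\sqrt{a_n}/32\bigr)}{2\ell_n}\quad\text{on }R_I.
\]
Integrating over $R_I$ gives at least $\tfrac{\ell_n}{32}\Phi(\cdot)$. Summing over the good $I$ (at least $\tfrac{15}{16}\ell_n^{-1}$ of them) and then over $n$, using the disjointness, yields \eqref{eq:main-lower}.

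The main obstacle is the bookkeeping in the $h_n$ error estimate together with the exact constant in the step-function area on the rectangle. If the quarter-rectangles give a constant that is slightly too small, the fallback is to adjust their size while keeping area $\ell_n^2/16$, and the sparsity margin $1/2048$ in \eqref{eq:sparse} leaves room for this.
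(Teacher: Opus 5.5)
Your proposal is correct and is essentially the paper's proof. It uses the same rectangles $[x,x+\ell_n/4]\times[x+3\ell_n/4,x+\ell_n]$ and the same decomposition $g=C_I+b_Ir_{k_n}+h_n$; your step term is exactly $-b_I(c-s)(t-c)$, so the bound $b_I\ell_n^2/16$ is immediate, the constants $\eta a_n\ell_n^2/128\to\eta a_n\ell_n^2/256\to\sqrt{\eta a_n}/32\to15/512$ come out as in the paper, and no fallback adjustment is needed. The only cosmetic difference is in the $h_n$ error: you split $[s,t]$ into dyadic pieces of length $\ell_{n+1}$ and recentre, getting $3\eta\ell_n\ell_{n+1}$, whereas the paper integrates by parts against the primitive $H_n$ with $\|H_n\|_\infty\le2\eta\ell_{n+1}$, getting $4\eta\ell_n\ell_{n+1}$; both rest on the same mean-zero cancellation.
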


\begin{proof}
Call the intervals in the last part of Lemma~\ref{lem:conditional} good.
For a good $I=[v,v+\ell_n)$, put $m=v+\ell_n/2$ and
\[
L_I=[v,v+\ell_n/4],\qquad R_I=[v+3\ell_n/4,v+\ell_n].
\]
For $s\in L_I$, $t\in R_I$, the constant term in \eqref{eq:two-step} has zero contribution to \eqref{eq:area}, and direct integration gives
\begin{equation}\label{eq:coarse-area}
 A_{g_n}(s,t)=b_I\left( \Big(m-\frac{s+t}{2}\Big)^2 - \Big(\frac{s-t}{2}\Big)^2\right)=-b_I(m-s)(t-m).
\end{equation}
Consequently,
\begin{equation}\label{eq:coarse-lower}
 |A_{g_n}(s,t)|\ge\eta a_n\ell_n^2/128.
\end{equation}

We now control all finer oscillations uniformly on this entire rectangle $L_I\times R_I$.
Let $H_n(u)=\int_0^u h_n(w)\,dw$. By the mean $0$ property in Lemma~\ref{lem:conditional}, every dyadic interval of length $\ell_{n+1}$ contributes zero to $H_n$. Thus
\[
 \norm{H_n}_\infty=\norm{\int_{\lfloor \frac{u}{l_{n+1}} \rfloor l_{n+1}}^u h_n(w)\,dw}_\infty \le2\eta\ell_{n+1}.
\]
Integration by parts, with $r=t-s$, gives
\begin{align*}
 A_{h_n}(s,t) &=\frac r2\big(H_n(t)+H_n(s)\big)-\int_s^t H_n(u)\,du,\\
 |A_{h_n}(s,t)|&\le2r\norm{H_n}_\infty \le4\eta\ell_n\ell_{n+1} \le\eta a_n\ell_n^2/512.
\end{align*}
Combining this with \eqref{eq:coarse-lower} yields
\begin{equation}\label{eq:area-lower}
 |A_g(s,t)|\ge\eta a_n\ell_n^2/256.
\end{equation}
By Lemma~\ref{lem:lift},
\[
 \ell_n/2\le t-s\le\ell_n, \qquad \|\gamma_g(s)^{-1}\gamma_g(t))\|\le2\ell_n.
\]
It follows that
\begin{equation}\label{eq:angle-lower}
 \theta(\gamma_g(s)^{-1}\gamma_g(t)) \ge\frac{\sqrt\eta}{32}\sqrt{a_n}.
\end{equation}
Using monotonicity of $\Phi$ and $|L_I\times R_I|=\ell_n^2/16$, we obtain
\begin{equation}\label{eq:one-rectangle}
 \int_{L_I}\int_{R_I} K_\Phi(\gamma_g(s)^{-1}\gamma_g(t))\,dt\,ds \ge\frac{\ell_n}{32} \Phi\!\left(\frac{\sqrt\eta}{32}\sqrt{a_n}\right).
\end{equation}
The total length of good intervals at level $n$ is at least $15/16$.
Their rectangles are pairwise disjoint. Rectangles from different levels are disjoint as well: the separation of $s$ and $t$ lies in $[\ell_n/2,\ell_n]$, and \eqref{eq:sparse} implies $\ell_{n+1}<\ell_n/2$. Summing \eqref{eq:one-rectangle} proves
\eqref{eq:main-lower}. 
\end{proof}

\begin{remark}\label{rem:finite-rectangles}
For each $M\ge1$, let $F_M$ be the union of the rectangles $L_I\times R_I$ used in the proof for $1\le n\le M$.
This is a finite union, and every $(s,t)\in F_M$ satisfies
\[
 t-s\ge\ell_M/2>0.
\]
The estimates in the proof give
\[
 \iint_{F_M} K_\Phi\bigl(\gamma_g(s)^{-1}\gamma_g(t)\bigr)\,dt\,ds \ge \frac{15}{512} \sum_{n=1}^M \Phi\!\left(\frac{\sqrt\eta}{32}\sqrt{a_n}\right).
\]
Thus, if the series in \eqref{eq:main-lower} diverges, we can make the integral over $F_M$ arbitrarily large by choosing $M$ large enough, while $F_M$ remains a positive distance from the diagonal. We will use this fact in the smooth approximation argument in Section~\ref{5}.
\end{remark}

For the power kernels $K_\alpha$, the following corollary gives one bounded planar slope $g$ for which $\gamma_g([0,1])$ has infinite energy for every $0<\alpha<4$.

\begin{corollary}\label{cor:lipschitz-universal}
For each $\eta\in(0,1]$, one bounded planar slope $g$, with $\norm{g}_\infty\le\eta$, gives infinite energy for all $K_\alpha$, $0<\alpha<4$.
\end{corollary}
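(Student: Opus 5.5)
We apply Proposition~\ref{prop:energy} with $\Phi(u)=u^\alpha$ and a single sequence of amplitudes chosen independently of $\alpha$. Fix $\eta\in(0,1]$. For $\Phi(u)=u^\alpha$ the right-hand side of \eqref{eq:main-lower} equals
\[
\frac{15}{512}\Big(\frac{\sqrt\eta}{32}\Big)^{\alpha}\sum_{n=1}^\infty a_n^{\alpha/2}.
\]
We therefore need positive amplitudes satisfying two conditions at once:
\begin{enumerate}[label=\textup{(\alph*)}]
\item $\sum_n a_n^2\le 1/64$, which is the square budget \eqref{eq:square-budget};
\item $\sum_n a_n^{\alpha/2}=\infty$ for every $\alpha<4$, that is, for every exponent $\alpha/2<2$.
\end{enumerate}

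A choice that works is
\[
a_n=\frac{c}{\sqrt{n}\,\log(n+1)},
\]
with $c>0$ small. Then $a_n^2=c^2/(n\log^2(n+1))$, which is summable, so shrinking $c$ makes the total at most $1/64$. For $\beta=\alpha/2<2$ we have $a_n^\beta = c^\beta\, n^{-\beta/2}\log^{-\beta}(n+1)$. Since $\beta/2<1$, this dominates $n^{-1}$ for large $n$, so the series diverges by comparison.

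With the amplitudes fixed, we choose the scales $k_n$ according to \eqref{eq:sparse}; this is always possible. We then take $g=\eta\sin S$ as in \eqref{eq:S}, so that $\norm{g}_\infty\le\eta$. The construction of $g$ in Section~\ref{3} depends only on $(a_n)$, $(k_n)$ and $\eta$, not on $\alpha$. Proposition~\ref{prop:energy} then gives $I_{\Phi}(g)=\infty$ simultaneously for all $0<\alpha<4$. Finally, \eqref{eq:measure} gives $\energy_{K_\alpha}(\gamma_g([0,1]))\ge I_\Phi(g)=\infty$.

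No real obstacle remains. The only point needing care is that a single sequence must work for every $\alpha<4$ at once, including $\alpha$ arbitrarily close to $4$. That is why we use the borderline sequence $n^{-1/2}$ with a logarithmic correction, rather than a power $n^{-\gamma}$ tuned to one $\alpha$. The proposition already supplies uniformity in $\Phi$, since the construction is independent of $\Phi$.
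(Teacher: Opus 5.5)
Your proposal is correct and follows the paper's proof essentially verbatim: the same amplitudes $a_n=c/(\sqrt n\log(n+1))$, Proposition~\ref{prop:energy} applied with $\Phi(u)=u^\alpha$, and the observation that $(a_n)$, $(k_n)$ and $g$ do not depend on $\alpha$. The only addition is that you spell out the passage from $I_\Phi(g)=\infty$ to infinite energy via \eqref{eq:measure}, which the paper leaves implicit.
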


\begin{proof}
Choose
\[
 a_n=\frac{c}{\sqrt n\log(n+1)},
\]
where $c>0$ is small enough for \eqref{eq:square-budget}.
Then $\sum a_n^2<\infty$, whereas $\sum a_n^q=\infty$ for every $0<q<2$. Apply Proposition~\ref{prop:energy} with $\Phi(u)=u^\alpha$.
The choices of $(a_n)$, $(k_n)$ and $g$ in \eqref{eq:S} do not depend on $\alpha$.
\end{proof}

\begin{remark}\label{rem:signedarea}
The quantity $A_g(s,t)$ has a simple geometric meaning.
Let $\lambda_{s,t}$ be the affine function satisfying $\lambda_{s,t}(s)=f_g(s)$ and $\lambda_{s,t}(t)=f_g(t)$.
By \eqref{eq:area},
\[
A_g(s,t)=\int_s^t\bigl(\lambda_{s,t}(u)-f_g(u)\bigr)\,du.
\]
Thus $A_g(s,t)$ is the signed area between the planar curve and the line segment joining its endpoints. Regions below this segment contribute positively, and regions above it contribute negatively; see Figure~\ref{fig:signed-area}.

\begin{figure}[htbp]
\centering
\begin{tikzpicture}[x=1cm,y=1cm,font=\small,
  line cap=round,line join=round]

\begin{scope}
  \fill[black!12]
    (0.6,1.5) .. controls (1.2,0.2) and (2.2,0.2) .. (2.8,1.5)
    -- cycle;
  \fill[black!22]
    (2.8,1.5) .. controls (3.4,2.8) and (4.4,2.8) .. (5,1.5)
    -- cycle;
  \draw[->] (0.15,0) -- (5.65,0) node[right] {$u$};
  \draw[->] (0.15,0) -- (0.15,3.2);
  \draw[densely dotted] (0.6,0) -- (0.6,1.5);
  \draw[densely dotted] (5,0) -- (5,1.5);
  \node[below] at (0.6,0) {$s$};
  \node[below] at (5,0) {$t$};
  \draw[thick,dashed] (0.6,1.5) -- (5,1.5);
  \draw[thick]
    (0.6,1.5) .. controls (1.2,0.2) and (2.2,0.2) .. (2.8,1.5)
    .. controls (3.4,2.8) and (4.4,2.8) .. (5,1.5);
  \fill (0.6,1.5) circle (1.5pt);
  \fill (5,1.5) circle (1.5pt);
  \node at (1.7,1.08) {$+$};
  \node at (3.9,1.92) {$-$};
  \node[above] at (3.9,2.52) {$f_g$};
  \node at (2.8,-0.65) {(a) Signed area};
\end{scope}

\begin{scope}[xshift=6.8cm]
  \coordinate (P) at (0.8,0.9);
  \coordinate (Q) at (4.8,1.7);
  \coordinate (V) at (2.8,2.9);
  \fill[black!15] (P) -- (V) -- (Q) -- cycle;
  \draw[->] (0.15,0) -- (5.65,0) node[right] {$u$};
  \draw[->] (0.15,0) -- (0.15,3.2);
  \draw[densely dotted] (0.8,0) -- (P);
  \draw[densely dotted] (2.8,0) -- (2.8,1.3);
  \draw[densely dotted] (4.8,0) -- (Q);
  \node[below] at (0.8,0) {$s$};
  \node[below] at (2.8,0) {$m$};
  \node[below] at (4.8,0) {$t$};
  \draw[thick] (0.3,0.4) -- (V)
    node[midway,above,sloped] {$C_I+b_I$};
  \draw[thick] (V) -- (5.3,1.4)
    node[midway,above,sloped] {$C_I-b_I$};
  \draw[thick,dashed] (P) -- (Q);
  \fill (P) circle (1.5pt);
  \fill (Q) circle (1.5pt);
  \node at (2.85,1.85) {$|A_{g_n}(s,t)|$};
  \node[above] at (2.8,2.98) {$f_{g_n}$};
  \node at (2.8,-0.65) {(b) The computation for $g_n$};
\end{scope}
\end{tikzpicture}
\caption{Signed area between the planar curve and the line segment connecting its endpoints.
In (a), regions below the line segment connecting the endpoints contribute positively to $A_g(s,t)$, and regions above contribute negatively.
In (b), the two segments have planar slopes $C_I+b_I$ and $C_I-b_I$.
The vertical distance from the vertex at $u=m$ to the line segment joining the endpoints is $|f_{g_n}(m)-\lambda_{s,t}(m)|=|2b_I(m-s)(t-m)/(t-s)|$.
Hence the shaded triangle has area $|b_I(m-s)(t-m)|$, and $|A_{g_n}(s,t)|=|-b_I(m-s)(t-m)|$, where $m$ is the midpoint of $I$.}
\label{fig:signed-area}
\end{figure}

Our construction is related to those of Juillet~\cite{Juillet}, Li~\cite{Li}, and Chousionis, Li and Zhang~\cite{CLZ26}. We use Rademacher functions and conditional expectations to obtain explicit formulas for the dyadic averages of the planar slope. These formulas allow us to compute the signed area at each selected scale, while the separation of scales controls the contribution of finer oscillations. We can then choose the amplitudes and scales to obtain the required divergence of $I_\Phi(g)$.
\end{remark}

To treat general $\Phi$, we need to choose the amplitudes so that the sum in \eqref{eq:main-lower} diverges while $\sum_n a_n^2$ remains small. The following lemma provides the required choice of amplitudes.

\begin{lemma}[Selecting amplitudes]\label{lem:selection}
Suppose $\psi:(0,a_0]\to[0,\infty)$ satisfies $\limsup_{a\downarrow0}\psi(a)/a^2=\infty$.
For every $B>0$ there is a sequence of positive numbers $(a_n)_{n\ge 1}$ such that $a_n\le a_0$, $\sum_n a_n^2\le B$, and $\sum_n\psi(a_n)=\infty$.
\end{lemma}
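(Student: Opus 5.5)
The plan is a blocking argument. We pick a sequence of points $x_j\downarrow0$ at which $\psi(x_j)/x_j^2$ is huge, and we repeat each $x_j$ many times. The number of repetitions is chosen so that the block contributes little to $\sum a_n^2$ but at least $1$ to $\sum\psi(a_n)$.

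Since $\limsup_{a\downarrow0}\psi(a)/a^2=\infty$, for each $j\ge1$ we can choose $x_j\in(0,a_0]$ with
\[
\psi(x_j)\ge 2^j B^{-1} x_j^2 .
\]
We may also take $x_j$ small, say $x_j^2\le 2^{-j-1}B$; this is possible because the limsup is taken as $a\downarrow0$, so admissible points exist arbitrarily close to $0$. For such $x_j$ we have $\psi(x_j)>0$. Set
\[
N_j=\bigl\lfloor 2^{-j}B/x_j^2\bigr\rfloor .
\]
The smallness of $x_j$ gives $2^{-j}B/x_j^2\ge2$, hence $N_j\ge1$ and also $N_j\ge \tfrac12\cdot 2^{-j}B/x_j^2$.

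The sequence $(a_n)$ is formed by listing $x_1$ repeated $N_1$ times, then $x_2$ repeated $N_2$ times, and so on. Every term is positive and at most $a_0$. The square sum is
\[
\sum_n a_n^2=\sum_j N_jx_j^2\le\sum_j 2^{-j}B=B .
\]
For divergence, each block satisfies
\[
N_j\psi(x_j)\ge \tfrac12\cdot 2^{-j}B x_j^{-2}\cdot 2^jB^{-1}x_j^2=\tfrac12 ,
\]
so summing over $j$ gives $\sum_n\psi(a_n)=\infty$.

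The only delicate point is ensuring $N_j\ge1$, so that the floor function costs at most a factor $2$. This is exactly why $x_j$ must be chosen small relative to $B$, and the hypothesis allows it. The lemma requires no measurability or monotonicity of $\psi$.
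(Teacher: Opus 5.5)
Your proposal is correct and is essentially the paper's proof: both pick points close to $0$ where $\psi(a)/a^2$ is large, take $\lfloor B2^{-j}/x_j^2\rfloor$ copies of each, and use smallness of $x_j$ to keep the floor within a factor of $2$. The only difference is the growth threshold for $\psi(x_j)/x_j^2$ ($2^jB^{-1}$ for you, $2^{2j}$ in the paper), which merely changes each block's contribution from at least $\tfrac12$ to at least $(B/2)2^j$.
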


\begin{proof}
For each $j\ge1$, choose $b_j>0$ so small that
\[
 b_j\le\min(a_0,2^{-j}),\qquad b_j^2\le B2^{-j-1},\qquad \psi(b_j)\ge2^{2j}b_j^2.
\]
Use $M_j=\lfloor B2^{-j}/b_j^2\rfloor$ copies of $b_j$, i.e.,
\[
(a_n)_{n\ge1}=\bigl(
\underbrace{b_1,\ldots,b_1}_{M_1\text{ copies}},
\underbrace{b_2,\ldots,b_2}_{M_2\text{ copies}},
\ldots\bigr).
\]
Then $M_jb_j^2\le B2^{-j}$, while $b_j^2 \le B2^{-j-1}$ implies $M_j\ge B2^{-j}/(2b_j^2)$. Thus
\[
 \sum_jM_jb_j^2\le B,\qquad M_j\psi(b_j)\ge(B/2)2^j.
\]
Concatenating these finite sequences gives the required sequence.
\end{proof}

The following corollary combines Lemma~\ref{lem:selection} with Proposition~\ref{prop:energy} to obtain a bounded planar slope $g$ satisfying $I_\Phi(g)=\infty$. We include continuity of $\Phi$ in the hypotheses because it will be needed for the smooth approximation in Section~\ref{5}.

\begin{corollary}\label{cor:profile-obstruction}
If $\Phi$ is continuous and nondecreasing on $[0,1]$, $\Phi(0)=0$, and $\Phi(u)/u^4$ is unbounded as $u\downarrow0$, then for every $\eta\in(0,1]$ there is a bounded planar slope $g$, with $\norm{g}_\infty\le\eta$, for which $ I_\Phi(g)=\infty$.
\end{corollary}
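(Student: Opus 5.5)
The plan is to combine Lemma~\ref{lem:selection} with Proposition~\ref{prop:energy}. Fix $\eta\in(0,1]$. Set $a_0=1/8$ and define
\[
\psi(a)=\Phi\!\left(\frac{\sqrt\eta}{32}\sqrt a\right),\qquad 0<a\le a_0 .
\]
The argument $\frac{\sqrt\eta}{32}\sqrt a$ lies in $[0,1]$, so $\psi$ is well defined and nonnegative. We need to check that $\limsup_{a\downarrow0}\psi(a)/a^2=\infty$. Substitute $u=\frac{\sqrt\eta}{32}\sqrt a$, so that $a=(32/\sqrt\eta)^2u^2$ and
\[
\frac{\psi(a)}{a^2}=\frac{\eta^2}{32^4}\cdot\frac{\Phi(u)}{u^4}.
\]
As $a\downarrow0$, $u$ ranges over all small positive values. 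Since $\Phi(u)/u^4$ is unbounded as $u\downarrow0$ and $\eta>0$ is fixed, the $\limsup$ is infinite.

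Now apply Lemma~\ref{lem:selection} with $B=1/64$. This gives positive amplitudes $a_n\le a_0$ with $\sum_n a_n^2\le 1/64$, which is exactly \eqref{eq:square-budget}, and $\sum_n\psi(a_n)=\infty$. Next choose scales $k_n$ satisfying \eqref{eq:sparse}; this is always possible. Build $g=\eta\sin S$ as in \eqref{eq:S}, so that $\norm{g}_\infty\le\eta$ and $g$ is bounded and measurable.

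Proposition~\ref{prop:energy} requires only that $\Phi$ be nondecreasing with $\Phi(0)=0$. It gives
\[
I_\Phi(g)\ge\frac{15}{512}\sum_n\psi(a_n)=\infty .
\]

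The only point needing care is the change of variables that transfers the unboundedness of $\Phi(u)/u^4$ to $\psi(a)/a^2$; the rest is direct bookkeeping. Continuity of $\Phi$ is not used here; it is kept in the hypotheses only for Section~\ref{5}.
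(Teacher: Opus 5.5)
Your proposal is correct and follows the paper's proof: the same choice $\psi(a)=\Phi(\sqrt\eta\sqrt a/32)$ on $(0,1/8]$, the identity $\psi(a)/a^2=(\eta^2/32^4)\,\Phi(u)/u^4$, then Lemma~\ref{lem:selection} with $B=1/64$ followed by Proposition~\ref{prop:energy}. Your observation that continuity of $\Phi$ is not needed at this step also matches the paper's remark.
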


\begin{proof}
Set $c_\eta=\sqrt\eta/32$ and $\psi(a)=\Phi(c_\eta\sqrt a)$ for $0<a\le1/8$. With $u=c_\eta\sqrt a$,
\[
 \frac{\psi(a)}{a^2}=c_\eta^4\frac{\Phi(u)}{u^4}.
\]
Apply Lemma~\ref{lem:selection} with $B=1/64$, and then Proposition~\ref{prop:energy}.
\end{proof}

\section{Smooth approximation and gluing}\label{5}

In this section, we construct a continuous planar slope $g$ with $I_\Phi(g)=\infty$. We first obtain smooth functions with arbitrarily large auxiliary integral and then combine rescaled copies of these functions. This gives the required $1$-Ahlfors regular horizontal $C^1$ curves and completes the proofs of Theorems~\ref{thm:classification} and~\ref{thm:universal}.

The following lemma shows that the auxiliary integral can remain arbitrarily large after smoothing the planar slope. 
Here continuity of $\Phi$ allows us to preserve a large integral over the finite union of rectangles in Remark~\ref{rem:finite-rectangles}.

\begin{lemma}[Smooth functions with arbitrarily large auxiliary integral]\label{lem:smooth}
Under the hypotheses of Corollary~\ref{cor:profile-obstruction}, for every $0<\eta\le1$ and $M>0$ there exists $b\in C_c^\infty(0,1)$ such that $\norm b_\infty\le\eta$ and $ I_\Phi(b)>M$.
\end{lemma}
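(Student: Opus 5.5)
Start from the bounded measurable slope $g$ of Corollary~\ref{cor:profile-obstruction}, built with slope bound $\eta/2$ in place of $\eta$. By Remark~\ref{rem:finite-rectangles}, since the series in \eqref{eq:main-lower} diverges, we can fix $M_0$ so that the integral of $K_\Phi(\gamma_g(s)^{-1}\gamma_g(t))$ over the finite union of rectangles $F=F_{M_0}$ exceeds $2M+2$. Every $(s,t)\in F$ satisfies $t-s\ge\delta:=\ell_{M_0}/2>0$.

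\textbf{Approximation.} Approximate $g$ in $L^1(0,1)$ by $b_j\in C_c^\infty(0,1)$ with $\norm{b_j}_\infty\le\eta/2\le\eta$. To do this, first cut $g$ off to $\1_{[1/j,1-1/j]}\,g$, then mollify with a kernel of radius smaller than $1/(2j)$. Mollification preserves the sup bound, and $b_j\to g$ in $L^1$ and hence in $L^2$. By \eqref{eq:lift}, $f_{b_j}\to f_g$ uniformly, since $|f_{b_j}(t)-f_g(t)|\le\norm{b_j-g}_1$. Consequently $z_{b_j}\to z_g$ uniformly, and $\gamma_{b_j}\to\gamma_g$ uniformly on $[0,1]$. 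By the first formula in \eqref{eq:area} we also have $|A_{b_j}(s,t)-A_g(s,t)|\le\norm{b_j-g}_1$.

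\textbf{Passing to the limit on $F$.} The kernel $K_\Phi$ is continuous on $\HH\setminus\{0\}$, because $\theta$ and $\|\cdot\|$ are continuous there and $\Phi$ is continuous. On $F$, Lemma~\ref{lem:lift} gives $\|\gamma_{b_j}(s)^{-1}\gamma_{b_j}(t)\|\ge\delta$, uniformly in $j$. So all the points $\gamma_{b_j}(s)^{-1}\gamma_{b_j}(t)$ with $(s,t)\in F$ lie in the compact set $\{\delta\le\|p\|\le2\}$, where $K_\Phi$ is uniformly continuous and bounded by $\Phi(1)/\delta$. The group product is continuous, so $\gamma_{b_j}(s)^{-1}\gamma_{b_j}(t)\to\gamma_g(s)^{-1}\gamma_g(t)$ uniformly on $F$. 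Hence $K_\Phi(\gamma_{b_j}(s)^{-1}\gamma_{b_j}(t))$ converges uniformly on $F$. Alternatively, one can use dominated convergence with the constant bound $\Phi(1)/\delta$. Therefore
\[
\iint_F K_\Phi\bigl(\gamma_{b_j}(s)^{-1}\gamma_{b_j}(t)\bigr)\,dt\,ds\;\longrightarrow\;\iint_F K_\Phi\bigl(\gamma_g(s)^{-1}\gamma_g(t)\bigr)\,dt\,ds>2M+2.
\]
Since $K_\Phi\ge0$, for $j$ large we get $I_\Phi(b_j)\ge\iint_F K_\Phi(\cdots)>M$. Note that $I_\Phi$ integrates over $[0,1]^2$, which contains $F$ with $s<t$.

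The step requiring the most care is obtaining uniform convergence of the relative positions $\gamma_{b_j}(s)^{-1}\gamma_{b_j}(t)$ while staying away from the singularity at $0$. This is handled by the positive separation from the diagonal recorded in Remark~\ref{rem:finite-rectangles} together with the lower bi-Lipschitz bound \eqref{eq:bilip}. The remaining steps are routine.
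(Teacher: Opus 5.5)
Your proof is correct and follows essentially the same route as the paper: take the finite union $F$ of testing rectangles from Remark~\ref{rem:finite-rectangles}, approximate $g$ in $L^1$ by cut-off and mollified slopes in $C_c^\infty(0,1)$ with the same sup bound, and pass to the limit on $F$ using uniform convergence of $\gamma_{b_j}$ and continuity of $K_\Phi$ on a compact set away from the identity, then conclude by positivity. The differences are only cosmetic; for example, starting from slope bound $\eta/2$ is unnecessary, since mollification with a nonnegative kernel already preserves the bound $\eta$.
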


\begin{proof}
Choose the planar slope $g$ in Corollary~\ref{cor:profile-obstruction}.
By Remark~\ref{rem:finite-rectangles}, there is a finite union $F$ of its testing rectangles, separated from the diagonal, such that the auxiliary integral over $F$ exceeds $2M$.

Choose $b_j\in C_c^\infty(0,1)$ satisfying $\norm{b_j}_\infty\le\eta$ and $b_j\to g$ in $L^1(0,1)$.
Such an approximation follows by cutting off near the endpoints and convolving with a nonnegative approximate identity.
The definitions give
\[
 \norm{f_{b_j}-f_g}_\infty\le\norm{b_j-g}_1, \qquad \norm{z_{b_j}-z_g}_\infty\le\norm{b_j-g}_1.
\]
Thus $\gamma_{b_j}\to\gamma_g$ uniformly. On $F$ all the first-coordinate differences are bounded away from zero. The kernel $K_\Phi$ is continuous off the identity, and the group differences $\gamma_g(s)^{-1}\gamma_g(t)$ under consideration lie in a fixed compact set away from the identity. The kernel integrands therefore converge uniformly on $F$. For sufficiently large $j$, their integrals over $F$ exceed $M$. Since the full auxiliary integral is nonnegative, $b=b_j$ has the asserted property.
\end{proof}

The following proposition combines rescaled copies of the smooth functions from Lemma~\ref{lem:smooth} into one continuous planar slope with infinite auxiliary integral. We place the copies on disjoint intervals accumulating at $0$ and choose their maximum absolute values to tend to zero as the intervals approach $0$.

\begin{proposition}[Gluing]\label{prop:gluing}
If $\Phi$ satisfies the hypotheses of Corollary~\ref{cor:profile-obstruction}, then for every $0<\eta\le1$ there is $g\in C([0,1])$ with $\norm g_\infty\le\eta$ and $ I_\Phi(g)=\infty$.
\end{proposition}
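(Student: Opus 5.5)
Fix $\eta$. Partition $(0,1]$ into the dyadic intervals $J_j=(2^{-j},2^{-j+1}]$ for $j\ge1$, and write $\lambda_j=2^{-j}$ for the length of $J_j$. Choose slope bounds $\eta_j=\eta/j$, which tend to $0$. By Lemma~\ref{lem:smooth}, applied with $\eta_j$ and a target $M_j$ to be fixed below, there is $b^{(j)}\in C_c^\infty(0,1)$ with $\norm{b^{(j)}}_\infty\le\eta_j$ and $I_\Phi(b^{(j)})>M_j$. Define
\[
g(u)=b^{(j)}\bigl((u-2^{-j})/\lambda_j\bigr)\quad\text{for }u\in J_j,\qquad g(0)=0 .
\]
Each piece has compact support inside the open interval $J_j$, so $g$ is smooth on $(0,1]$. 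At $0$ we have $|g|\le\eta_j\to0$ on $J_j$, so $g$ is continuous at $0$. Hence $g\in C([0,1])$ and $\norm g_\infty\le\eta$.

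The key point is the scaling of the kernel integral under this rescaling. For $s,t\in J_j$, the first expression in \eqref{eq:area} is invariant under adding affine functions to $f_g$: the constant term in $g$ integrates to zero against $u-(s+t)/2$. Substituting $u=2^{-j}+\lambda_j u'$ gives
\[
A_g(s,t)=\lambda_j^2\,A_{b^{(j)}}(s',t').
\]
Similarly $f_g(t)-f_g(s)=\lambda_j\bigl(f_{b^{(j)}}(t')-f_{b^{(j)}}(s')\bigr)$ and $t-s=\lambda_j(t'-s')$. Therefore
\[
\gamma_g(s)^{-1}\gamma_g(t)=\delta_{\lambda_j}\bigl(\gamma_{b^{(j)}}(s')^{-1}\gamma_{b^{(j)}}(t')\bigr),
\]
because the horizontal coordinates of the group difference are just differences, and the vertical coordinate is $A$ by Lemma~\ref{lem:lift}. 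Since $K_\Phi$ is $-1$-homogeneous and $dt\,ds=\lambda_j^2\,dt'\,ds'$,
\[
\int_{J_j}\int_{J_j}K_\Phi\bigl(\gamma_g(s)^{-1}\gamma_g(t)\bigr)\,dt\,ds=\lambda_j\, I_\Phi(b^{(j)})>\lambda_j M_j .
\]
The integrand in $I_\Phi$ runs over all ordered pairs, so this block formula holds with the full square $J_j\times J_j$, including both orders of $s$ and $t$.

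Choose $M_j=2^j$. The diagonal blocks $J_j\times J_j$ are disjoint subsets of $[0,1]^2$, and the integrand is nonnegative, so
\[
I_\Phi(g)\ge\sum_j \lambda_j M_j=\sum_j 1=\infty .
\]

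The only step needing care is the scaling identity. One must check that the horizontal lift defined from base point $0$ gives group differences depending only on the increments over $[s,t]$, which is exactly what \eqref{eq:area} says. The constant offset $f_g(2^{-j})$ and the translation do not affect $A_g$, since $A_g$ is expressed via $g$ on $[s,t]$ alone. No cross-block estimates are needed, because we only bound the integral from below by the diagonal blocks.
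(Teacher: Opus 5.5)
Your proof is correct and follows essentially the paper's argument. You place rescaled copies of the smooth slopes from Lemma~\ref{lem:smooth}, with slope bounds tending to zero, on disjoint intervals accumulating at $0$; the dilation identity for group differences turns each diagonal block into $\lambda_j I_\Phi(b^{(j)})$, and positivity over the disjoint blocks gives divergence. The differences are cosmetic: contiguous dyadic intervals, $\eta_j=\eta/j$ and $M_j=2^j$, and checking the scaling directly from the area formula \eqref{eq:area} instead of through the curve identity \eqref{eq:scaled-block}.
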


\begin{proof}
For each $j\ge1$, apply Lemma~\ref{lem:smooth} with $\eta_j=\eta2^{-j}$ and $M_j=2^{2j}$, obtaining $b_j$.
Let
\[
 x_j=2^{-j},\qquad \lambda_j=2^{-j-2},\qquad  I_j=(x_j,x_j+\lambda_j).
\]
These intervals are disjoint and accumulate only at zero. Define
\begin{equation}\label{eq:glued-slope}
 g(t)=
 \begin{cases}
 b_j((t-x_j)/\lambda_j),&t\in I_j,\\
 0,&t\notin\bigcup_j I_j.
 \end{cases}
\end{equation}
Each $b_j$ vanishes near its endpoints, and $\norm{b_j}_\infty\le\eta_j\to0$. Therefore $g$ is continuous, including at zero, and $\norm g_\infty\le\eta$.

Put $P_j=\gamma_g(x_j)$. Direct substitution in \eqref{eq:lift} gives
\begin{equation}\label{eq:scaled-block}
 \gamma_g(x_j+\lambda_j u)=P_j\cdot\delta_{\lambda_j}\gamma_{b_j}(u) \qquad(0\le u\le1).
\end{equation}
Indeed the equality holds for the first two coordinates. For the third coordinate, since left translation preserves horizontality, both sides are horizontal curves. And since both sides have the same starting point when $u=0$, the equality holds for the third coordinate.

Since $K_\Phi(\delta_\lambda p)=\lambda^{-1}K_\Phi(p)$,
\eqref{eq:scaled-block} and a change of variables imply
\[
 \int_{I_j}\int_{I_j} K_\Phi(\gamma_g(s)^{-1}\gamma_g(t))\,dt\,ds=\int_0^1\int_0^1 \lambda_j^{-1} K_\Phi\!\left(\gamma_{b_j}(u)^{-1}\gamma_{b_j}(v)\right)\lambda_j^2\,dv\,du  =\lambda_j I_\Phi(b_j)>2^{j-2}.
\]
The squares $I_j\times I_j$ are disjoint. Summing and using positivity gives $ I_\Phi(g)=\infty$.
\end{proof}

The following corollary gives counterexamples for logarithmic modifications of $K_4$. These examples will also yield a single curve for all $0<\alpha<4$ in Theorem~\ref{thm:universal}.

\begin{corollary}[Logarithmic strengthening of $K_4$]\label{cor:log}
Let $\beta>0$, and let $\Phi_\beta$ be a continuous nondecreasing extension to $[0,1]$ of
\[
 \Phi_\beta(u)=u^4\big(\log(e/u)\big)^\beta
\]
for sufficiently small $u>0$, with $\Phi_\beta(0)=0$.
There is a compact $1$-Ahlfors regular horizontal $C^1$ curve on which $T_{K_{\Phi_\beta}}$ is unbounded. Its planar slope can be arbitrarily small.
\end{corollary}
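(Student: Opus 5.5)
The plan is to check that $\Phi_\beta$ satisfies the hypotheses of Corollary~\ref{cor:profile-obstruction} and then invoke Proposition~\ref{prop:gluing}. Continuity on $[0,1]$, monotonicity and $\Phi_\beta(0)=0$ are built into the statement, since $\Phi_\beta$ is assumed to be a continuous nondecreasing extension vanishing at $0$. Near $0$, the formula $u^4(\log(e/u))^\beta$ is indeed nondecreasing for small $u$. Its derivative is $u^3(\log(e/u))^{\beta-1}\bigl(4\log(e/u)-\beta\bigr)$, which is positive once $\log(e/u)>\beta/4$. So such an extension exists, for instance by continuing constantly from some small $u_0$. The key point is that
\[
\frac{\Phi_\beta(u)}{u^4}=\bigl(\log(e/u)\bigr)^\beta\to\infty\qquad(u\downarrow0),
\]
because $\beta>0$. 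Hence $\Phi_\beta(u)/u^4$ is unbounded as $u\downarrow0$.

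Next, for any given $\eta\in(0,1]$, Proposition~\ref{prop:gluing} produces $g\in C([0,1])$ with $\norm g_\infty\le\eta$ and $I_{\Phi_\beta}(g)=\infty$. By Lemma~\ref{lem:lift}, $E=\gamma_g([0,1])$ is a compact $1$-Ahlfors regular curve with absolute constants. It is horizontal by construction. It is $C^1$ with nonzero derivative because $g$ is continuous, since $\gamma_g'(t)=(1,g(t),\tfrac12(tg(t)-f_g(t)))$.

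Finally, since $\energy_{K_{\Phi_\beta}}(E)\ge I_{\Phi_\beta}(g)=\infty$, the testing inequality \eqref{eq:norm-test} against $\1_E$ shows that $\sup_\varepsilon\norm{T_{K_{\Phi_\beta},E,\varepsilon}}_{2\to2}=\infty$, so the operator is unbounded. The planar slope bound $\eta$ was arbitrary, which gives the last sentence of the statement.

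There is no real obstacle here. The only mild point is confirming that an admissible continuous nondecreasing extension exists, and the choice of extension plays no role beyond the hypotheses of Corollary~\ref{cor:profile-obstruction}, because only the small-$u$ behaviour enters Lemma~\ref{lem:selection}.
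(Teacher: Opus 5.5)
Your proof is correct and is essentially the paper's: the paper checks that $\Phi_\beta(u)/u^4\to\infty$ and then applies Theorem~\ref{thm:classification}. In the case where (i) fails, that theorem reduces to Proposition~\ref{prop:gluing} together with Lemma~\ref{lem:lift} and \eqref{eq:norm-test}, which is exactly the chain you wrote out.
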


\begin{proof}
The given $\Phi$ is increasing near zero, so such an extension exists. Its ratio to $u^4$ tends to infinity. Apply Theorem~\ref{thm:classification}.
\end{proof}

\begin{proof}[Proof of Theorem~\ref{thm:universal}]
In Corollary~\ref{cor:log}, fix $\beta$, there exists a compact $1$-Ahlfors regular horizontal $C^1$ curve $E$ such that $T_{K_{\Phi_\beta},E}$ is $L^2$-unbounded. Since $\Phi_\beta \lesssim u^\alpha$ for every $\alpha \in (0,4)$, $T_{K_\alpha, E}$ is $L^2$-unbounded on $E$ for every $\alpha \in (0,4)$.
\end{proof}

\begin{proof}[Proof of Theorem~\ref{thm:classification}]
If \textup{(i)} holds, continuity and boundedness of $\Phi$ on $[0,1]$
give a finite constant $C_\Phi$ such that
\[
 \Phi(u)\le C_\Phi u^4\quad(0\le u\le1).
\]
Hence $0\le K_\Phi\le C_\Phi K_4$, and for every $\varepsilon>0$,
\[
 |T_{K_\Phi,E,\varepsilon}h|
 \le C_\Phi T_{K_4,E,\varepsilon}|h|.
\]
The known $K_4$ estimate proves \textup{(ii)}. Lemma~\ref{lem:lift}
gives \textup{(ii)}$\Rightarrow$\textup{(iii)}.
If \textup{(i)} fails, Proposition~\ref{prop:gluing} gives, for the
fixed $\eta$ in \textup{(iii)}, a continuous planar slope with infinite
auxiliary integral $I_\Phi(g)$. Equations \eqref{eq:measure} and
\eqref{eq:norm-test} contradict \textup{(iii)}. The same construction
works for each $\eta\in(0,1]$ and gives \eqref{eq:infinite-energy}.
\end{proof}

\begin{remark}
It is proven in \cite[Theorem 1.7]{FO} that the operator associated to $K_4$ is bounded on every $1$-Ahlfors regular curve.
The norm convention does not affect this assertion. That paper~\cite{FO} uses
\[
 \|(x,y,z)\|_{FO}=\big((x^2+y^2)^2+16z^2\big)^{1/4}, \qquad \|p\| \le \|p\|_{FO} \le2\|p\|.
\]
The associated Hausdorff measures satisfy $\cH_{\|\cdot \|}^1\le\cH_{\|\cdot \|_{FO}}^1\le2\cH_{\|\cdot \|}^1$, and regularity is preserved.
Also $|z|^2/\|p\|^5\le32|z|^2/\|p\|_{FO}^5$ and $\{\|p^{-1}q\|>\varepsilon\}\subset \{\|p^{-1}q\|>\varepsilon\}$.
For nonnegative inputs, we have pointwise comparison of truncated singular integral operators. For signed and complex-valued $f$, $|T_\varepsilon f|\le T_\varepsilon |f|$, so the same estimate applies.
\end{remark}

\section*{Statements and Declarations}

\noindent\textbf{Competing interests.}
The author has no competing interests to declare that are relevant
to the content of this article.

\medskip
\noindent\textbf{Data availability.}
No datasets were generated or analysed during the current study.

\medskip
\noindent\textbf{Use of AI assistance.}
ChatGPT was consulted for feedback on the manuscript and for
assistance with the wording of the literature comparison and
submission declarations, as well as with the preparation of the TikZ figure. The mathematical statements and proofs
remain as in the author's original manuscript. The author takes
responsibility for the content of the manuscript.

\bibliographystyle{plain}
\bibliography{ref}

\end{document}